\documentclass[pdflatex,sn-mathphys-num]{sn-jnl}

\usepackage{graphicx}%
\usepackage{multirow}%
\usepackage{amsmath,amssymb,amsfonts}%
\usepackage{amsthm}%
\usepackage{mathrsfs}%
\usepackage[title]{appendix}%
\usepackage{xcolor}%
\usepackage{textcomp}%
\usepackage{manyfoot}%
\usepackage{booktabs}%
\usepackage{algorithm}%
\usepackage{algorithmicx}%
\usepackage{algpseudocode}%
\usepackage{listings}%

\usepackage{bm}
\usepackage{siunitx}
\theoremstyle{thmstyleone}%
\newtheorem{theorem}{Theorem}
\newtheorem{proposition}[theorem]{Proposition}%

\theoremstyle{thmstyletwo}%
\newtheorem{remark}{Remark}%

\theoremstyle{thmstylethree}%
\newcommand{\coloneq}{:=}
\newcommand{\bbC}{\mathbb{C}}
\newcommand{\bbR}{\mathbb{R}}
\newcommand{\bmA}{\bm{A}}
\newcommand{\bmb}{\bm{b}}

\newcommand{\bmc}{\bm{c}}
\newcommand{\bmC}{\bm{C}}
\newcommand{\bmD}{\bm{D}}

\newcommand{\bmH}{\bm{H}}
\newcommand{\bmI}{\bm{I}}

\newcommand{\bmK}{\bm{K}}
\newcommand{\bmM}{\bm{M}}

\newcommand{\bmu}{\bm{u}}
\newcommand{\bmv}{\bm{v}}
\newcommand{\bmV}{\bm{V}}
\newcommand{\bmx}{\bm{x}}

\newcommand{\calR}{\mathcal{R}}
\newcommand{\calW}{\mathcal{W}}
\newcommand{\rme}{\mathrm{e}}
\newcommand{\rmi}{\mathrm{i}}
\newcommand{\hatbmA}{\hat{\bmA}}

\begin{document}

\title[An Error Control Framework for Computing the Exponential of Matrices Arising from the Finite Element Discretization]{An Error Control Framework for Computing the Exponential of Matrices Arising from the Finite Element Discretization}


\author*[1]{\fnm{Fuminori} \sur{Tatsuoka}}\email{tatsuoka3229@ihi-g.com}

\author[2]{\fnm{Yuto} \sur{Miyatake}}\email{yuto.miyatake.cmc@osaka-u.ac.jp}

\author[3]{\fnm{Tomohiro} \sur{Sogabe}}\email{sogabe@na.nuap.nagoya-u.ac.jp}

\affil*[1]{\orgdiv{Corporate Research and Development Division}, \orgname{IHI Corporation}, \orgaddress{\street{1 Shin-Nakahara-cho, Isogo-ku}, \city{Yokohama}, \postcode{235-8501}, \state{Kanagawa}, \country{Japan}}}

\affil[2]{\orgdiv{D3 Center}, \orgname{The University of Osaka}, \orgaddress{\street{1-32 Machikaneyama-cho}, \city{Toyonaka}, \postcode{560-0043}, \state{Osaka}, \country{Japan}}}

\affil[3]{\orgdiv{Graduate School of Engineering}, \orgname{Nagoya University}, \orgaddress{\street{Furo-cho}, \city{Chikusa}, \postcode{464-8603}, \state{Nagoya}, \country{Japan}}}


\abstract{
  Several methods for computing the action of the matrix exponential $\rme^{\bmA} \bmb$ are expressed by substituting $\bmA$ into a rational approximation of the scalar exponential function.
  The error of such methods can be estimated using the numerical range of $\bmA$, which enables the computation of $\rme^{\bmA}\bmb$ with a prescribed accuracy.
  However, when the input matrix has the structure $\bmA = \tau \bmM^{-1} \bmK$, this approach is challenging because computing the bounding box of numerical range is difficult and the numerical range may be too large to construct rational approximations on it.
  In this paper, focusing on the case where $\bmM$ is a well-conditioned symmetric positive definite matrix, we propose considering the numerical range of a similarity transformed matrix of $\bmA$.
  The numerical range of transformed matrix is not only numerically computable but can also be theoretically bounded depending on properties of $\bmK$.
  Numerical experiments confirm that the computations can be performed within the prescribed error tolerance.
}

\keywords{matrix function, matrix exponential, rational approximation, numerical range}


\pacs[MSC Classification]{65F60, 15A16}

\maketitle

\section{Introduction}
For a square matrix $\bmA \in \bbC^{n\times n}$, its exponential is defined as \cite[Eq. (10.1)]{higham2008functions}
\begin{align}
  \rme^{\bmA} \coloneq \bmI + \bmA + \frac{1}{2!}\bmA^2 + \frac{1}{3!}\bmA^3 + \cdots.
\end{align}
The matrix exponential allows us to represent the solutions of linear initial value problems:
for the initial value problem
\begin{align}
  \dot{\bmu}(t) = \bmA \bmu(t),
  \qquad \bmu(0) = \bmu_0,
\end{align}
it holds that
\begin{align}
  \bmu(t) = \rme^{t\bmA} \bmu_0.
\end{align}
This relation facilitates the construction of numerical solvers called exponential integrators for problems that are challenging to solve with conventional methods such as Runge--Kutta methods; see e.g. \cite{hochbruck2010exponential} for more details.
Hence, computational methods for $\rme^{\bmA}\bmb ~ (\bmb\in\bbC^n)$, are of significant importance in numerical analysis.

Roughly speaking, computational methods for $\rme^{\bmA}\,\bmb$ can be classified into two categories.
The first category includes methods that substitute $\bmA$ into a prescribed scalar rational approximation $r(z) \approx \rme^{z}$, such as the truncated Taylor series \cite{almohy2011computing},
\begin{align}
  \rme^{\bmA}\bmb
  \approx \left(
    \bmI + \bmA + \frac{1}{2!}\bmA^2 + \frac{1}{3!}\bmA^3 + \cdots + \frac{1}{m!}\bmA^m
  \right) \bmb.
\end{align}
sub-diagonal Pad\'{e} approximation \cite{guettel2016scaled}, polynomial interpolation \cite{caliari2004interpolating,caliari2016leja}, and numerical quadrature \cite{weideman2006optimizing,weideman2007parabolic,tatsuoka2024computing}.
The other category includes methods based on projection on an appropriate low dimensional subspace.
Let $\bmV \in \bbC^{n \times m} ~ (m \ll n)$ be an orthogonal basis of a certain subspace.
Then, $\rme^{\bmA} \bmb$ is approximated by $\bmV \rme^{\bmH} \bmV^\ast \bmb$, where $\bmH = \bmV^\ast \bmA \bmV \in \bbC^{m\times m}$.
To construct $\bmV$, polynomial Krylov subspace, extended Krylov subspace, and rational Krylov subspace have been used \cite{saad1992analysis,goeckler2013convergence,guettel2013rational}.

While methods in the latter category are often employed due to their efficiency in the context of exponential integrators, methods in the former category remain important for several reasons.
For example, some of these methods are applicable even in limited-memory environments, i.e., when $n$ is so large that storing the block matrix $\bmV\in\mathbb{C}^{n\times m}$ with moderate $m$ becomes prohibitive.
For another reason, when $r(z)$ is given in partial fraction form (e.g., the subdiagonal Pad\'{e} approximation and quadrature-based algorithms), the resulting algorithm is well-suited for parallel computing.

In this paper, we focus on the methods based on the first category, i.e., those based on scalar rational approximations.
To guarantee the approximation accuracy, it is important to check $\|r(\bmA) - \rme^{\bmA}\|_2 \le \epsilon$ for a given tolerance $\epsilon$.
A straightforward approach to bound $\|r(\bmA) - \rme^{\bmA}\|_2$ is to utilize the numerical range (also called the field of values) of $\bmA$: $\calW(\bmA) \coloneq \{\bmv^\ast \bmA \bmv \colon \bmv \in \bbC^n, \|\bmv\|_2 = 1\}$.
It is known that $\calW(\bmA)$ is a $(1+\sqrt{2})$-spectral set for $\bmA$ (see \cite{crouzeix2017numerical}), and therefore, for any rational function $r$ that is bounded on $\calW(\bmA)$, it holds that
\begin{align} \label{eq:crouzeix_inequality}
  \|r(\bmA) - \rme^{\bmA}\|_2 \le (1 + \sqrt{2}) \sup_{z \in \calW(\bmA)} |r(z) - \rme^z|.
\end{align}
While the numerical range $\calW(\bmA)$ cannot be obtained analitycally and estimating it precisely is computationally expensive,
a rectangular region $\calR$ that contains $\calW(\bmA)$ can be obtained with cheaper costs.
Specifically, the left and right endpoints of $\calR$ are given by the extreme eigenvalues of $(\bmA + \bmA^\ast)/2$, and the top and bottom endpoints correspond to the extreme eigenvalues of $(\bmA - \bmA^\ast)/2 \rmi$.
This fact is noted in e.g. \cite{caliari2016leja} and a special case of the algorithm estimating $\calW(\bmA)$ in \cite{johnson1978numerical}.

Based on the above discussion, an error control framework for $\rme^{\bmA} \bmb$ using $r(z)$ can be obtained as in Algorithm \ref{alg:conventional}.
In this framework, we first determine $\calR$ enclosing $\calW(\bmA)$ by computing the extreme eigenvalues of the Hermitian and skew Hermitian parts of $\bmA$.
We then construct $r(z) \approx \rme^z$ on $\calR$ such that the condition $\|r(\bmA) - \rme^{\bmA}\|_2 \le \epsilon$ is satisfied.
A similar strategy had been proposed to provide an error bound of polynomial interpolation \cite{caliari2004interpolating} \footnote{
  Although $\phi$-function $(\rme^z - 1)/z$ is focused in \cite{caliari2004interpolating} rather than exponential function, the method described therein remains applicable to exponential, see \cite{caliari2016leja}.
}.
\begin{algorithm}[htbp]
  \caption{An error control framework based on numerical range for $\rme^{\bmA} \bmb$.}
  \label{alg:conventional}
  \begin{algorithmic}[1]
    \Statex \textbf{Input} $\bmA, \bmb, \epsilon$
    \State Compute the maximum (minimum) eigenvalue $\mu_{\max}$ ($\mu_{\min}$) of $(\bmA + \bmA^\ast)/2$
    \State Compute the maximum (minimum) eigenvalue $\nu_{\max}$ ($\nu_{\min}$) of $(\bmA - \bmA^\ast)/2\rmi$
    \State $\calR = \left\{z \in \bbC \colon \mathrm{Re}(z) \in [\mu_{\max},\mu_{\min}], \mathrm{Im}(z) \in [\nu_{\max},\nu_{\min}] \right\} \quad ( \supseteq \calW(\bmA))$
    \State Construct a rational approximation $r(z) \approx \rme^z$ on $\calR$ such that
    \begin{align}
      \max_{z \in \calR} |r(z) - \rme^z| \le \frac{\epsilon}{1 + \sqrt{2}}.
    \end{align}
    \Statex \textbf{Output} $r(A)\bmb \quad (\|r(\bmA)\bmb - \rme^{\bmA}\bmb\|_2 \le \epsilon \|\bmb\|_2)$
  \end{algorithmic}
\end{algorithm}

In this study, we focus on the situation $\bmA = \tau \bmM^{-1} \bmK$, where $\tau > 0$, $\bmM\in\mathbb{R}^{n\times n}$ is a well‑conditioned symmetric positive definite (SPD) matrix, and $\bmK\in\mathbb{R}^{n\times n}$.
For example, such matrices arise from the finite element discretization of advection-diffusion equations.
However, there are two difficulties to apply Algorithm \ref{alg:conventional} to this situation.
First, to the best of our knowledge, there are no estimates and computational methods for obtaining eigenvalues of (skew) symmetric part of $\bmA = \tau \bmM^{-1} \bmK$, and therefore computing the rectangle region $\calR$ is not straightforward.
Second, $\calW(\bmA)$ can be so large that an accurate rational approximation cannot be constructed on it.
For example, the left figure in Figure \ref{fig:example_nr} illustrates $\calW(\bmA)$ for a test matrix from an advection-diffusion equation described in Section 3.
The figure shows that $\calW(\bmA)$ extends into the right half plane.
For this case, $r(z)$ must deal with large values (e.g., $\rme^{17.29}$) with the error tolerance $\epsilon$, which is practically impossible.

\begin{figure}[htbp]
  \centering
  \includegraphics[width=0.8\linewidth]{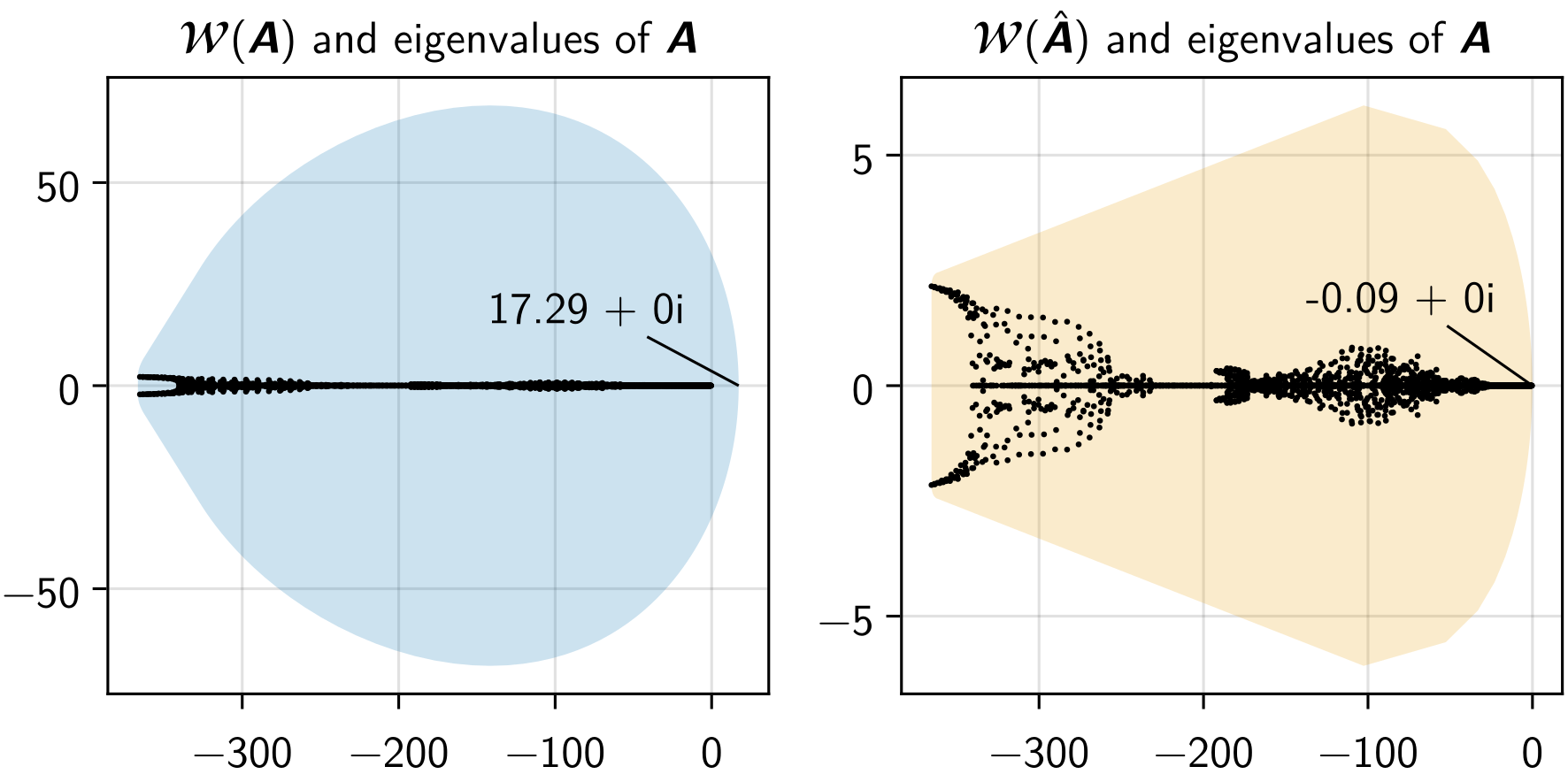}
  \caption{
    Comparison of $\calW(\bmA)$ and $\calW(\hatbmA)$.
    The matrix $\bmA$ is a test matrix described in Section 3.1, obtained by discretizing the two dimensional advection–diffusion equation on a square domain with a P2 finite element method.
    The set $\calW(\hatbmA)$ lies in the left half plane.
  }
  \label{fig:example_nr}
\end{figure}

In this paper, to apply the error control framework (Algorithm \ref{alg:conventional}) to $\bmA = \tau \bmM^{-1} \bmK$, we propose to use $\calW(\hatbmA)$ instead of $\calW(\bmA)$, where $\hatbmA \coloneq \bmM^{1/2} \bmA \bmM^{-1/2}$.
There are three convenient properties for $\calW(\hatbmA)$.
First, $\calW(\hatbmA)$ also provide us with an upper bound on $\|r(\bmA) - \rme^{\bmA}\|_2$ that is a value proportional to the supremum of $|r(z) - \rme^z|$ over $\calW(\hatbmA)$.
Second, a rectangular region enclosing $\calW(\hatbmA)$ can be obtained numerically by solving typical generalized eigenvalue problems.
Third, if $\calW(\bmK)$ lies in the left half plane, then $\calW(\hatbmA)$ still lies in the left half plane.
The third property can be seen in the right figure in Fig.~\ref{fig:example_nr}.
Based on them, we establish an error control framework for computing $\rme^{\tau \bmM^{-1} \bmK}\bmb$.

The organization of this paper is as follows.
Section 2 presents the error control framework for $\bmA = \tau\bmM^{-1}\bmK$.
Section 3 provides numerical results, and Section 4 concludes this paper.

\section{Error control framework\texorpdfstring{ for $\bmA = \tau \bmM^{-1} \bmK$}{}}
In this section, we show some properties of $\calW(\hatbmA)$ and propose the error control framework.
Firstly, it is shown that $\calW(\hatbmA)$ gives us an upper bound on the error $r(\bmA) - \rme^{\bmA}$ as follows:
\begin{theorem}
  For $\bmA = \tau \bmM^{-1}\bmK$, where $\tau > 0$, $\bmM \in \bbR^{n\times n}$ is an SPD matrix, and $\bmK \in \bbR^{n\times n}$, define $\hatbmA = \bmM^{1/2}\bmA \bmM^{-1/2}$.
  Then, for any rational function $r(z)$ that is bounded on $\calW(\hatbmA)$, it follows that
  \begin{align} \label{eq:new_bound}
    \|r(\bmA) - \rme^{\bmA}\|_2 \le (1+\sqrt{2}) \kappa(\bmM)^{1/2} \sup_{z \in \calW(\hatbmA)} |r(z) - \rme^z|,
  \end{align}
  where $\kappa(\bmM) = \|\bmM\|_2\|\bmM^{-1}\|_2$.
\end{theorem}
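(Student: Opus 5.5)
The plan is to move the problem to $\hatbmA$ by similarity, apply the Crouzeix--Palencia bound \eqref{eq:crouzeix_inequality} to $\hatbmA$, and then transfer back to $\bmA$, which costs the condition number of $\bmM^{1/2}$.

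\textbf{Step 1 (similarity commutes with the functions).} Since $\bmM$ is SPD, its square root $\bmM^{1/2}$ is SPD and invertible. Writing $\bmA = \bmM^{-1/2}\hatbmA\bmM^{1/2}$, we have $\bmA^k = \bmM^{-1/2}\hatbmA^k\bmM^{1/2}$ for every $k$. From the power series this gives $\rme^{\bmA} = \bmM^{-1/2}\rme^{\hatbmA}\bmM^{1/2}$.

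For the rational function $r = p/q$, note that $\bmA$ and $\hatbmA$ have the same spectrum. That spectrum lies in $\calW(\hatbmA)$, where $r$ is assumed bounded, so $q$ has no zeros at eigenvalues and $q(\bmA)$ is invertible. Then $q(\bmA)^{-1} = \bmM^{-1/2} q(\hatbmA)^{-1}\bmM^{1/2}$, and hence $r(\bmA) = \bmM^{-1/2} r(\hatbmA)\bmM^{1/2}$.

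One subtlety is a pole lying on the boundary of the closed convex set $\calW(\hatbmA)$, since boundedness is stated only on $\calW(\hatbmA)$. Here $r$ bounded on $\calW(\hatbmA)$ means it has no poles there, and the eigenvalues are in $\calW(\hatbmA)$. So $r(\hatbmA)$ is well defined, and I will simply note this.

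\textbf{Step 2 (bound for $\hatbmA$ and transfer back).} Combining the two identities from Step 1,
\[
r(\bmA)-\rme^{\bmA} = \bmM^{-1/2}\bigl(r(\hatbmA)-\rme^{\hatbmA}\bigr)\bmM^{1/2}.
\]
Taking 2-norms and using submultiplicativity gives
\[
\|r(\bmA)-\rme^{\bmA}\|_2 \le \|\bmM^{-1/2}\|_2\,\|\bmM^{1/2}\|_2\,\|r(\hatbmA)-\rme^{\hatbmA}\|_2 .
\]

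\textbf{Step 3 (evaluate the constant).} For SPD $\bmM$, $\|\bmM^{1/2}\|_2 = \lambda_{\max}(\bmM)^{1/2}$ and $\|\bmM^{-1/2}\|_2 = \lambda_{\min}(\bmM)^{-1/2}$. Their product is $\kappa(\bmM)^{1/2}$.

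\textbf{Step 4 (apply Crouzeix--Palencia).} Apply \eqref{eq:crouzeix_inequality} with $\hatbmA$ in place of $\bmA$. This uses that $\calW(\hatbmA)$ is a $(1+\sqrt2)$-spectral set for $\hatbmA$, applied to the function $r(z)-\rme^z$, which is bounded and analytic there because $\calW(\hatbmA)$ is compact. Substituting into Step 2 yields \eqref{eq:new_bound}.

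The main point needing care is this last application. The Crouzeix--Palencia result is stated for functions analytic in a neighborhood of the numerical range, or for rational functions, whereas $r - \exp$ is neither rational nor polynomial. I would handle this exactly as the paper does for \eqref{eq:crouzeix_inequality}: $r-\exp$ is analytic on a neighborhood of the compact set $\calW(\hatbmA)$ (no poles of $r$ there), and the spectral-set inequality extends to such functions by uniform rational approximation (Runge) together with continuity of the functional calculus. Everything else is routine.
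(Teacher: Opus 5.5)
Your proposal is correct and follows the paper's proof. The paper uses the same three steps: the similarity identity $r(\bmA)-\rme^{\bmA} = \bmM^{-1/2}\bigl(r(\hatbmA)-\rme^{\hatbmA}\bigr)\bmM^{1/2}$ (the paper cites Higham's Thm.~1.13(c), which you verify directly), the factor $\kappa(\bmM^{1/2})=\kappa(\bmM)^{1/2}$, and the Crouzeix--Palencia inequality applied to $\hatbmA$. Your remark on extending that inequality from rational functions to $r(z)-\rme^{z}$ by Runge approximation fills in a step the paper leaves implicit.
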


\begin{proof}
  For notational simplicity we write $g(\bmA)=r(\bmA)-\rme^{\bmA}$.
  Using the property in \cite[Thm.~1.13 (c)]{higham2008functions}, we can rewrite $g(\bmA)$ as
  \begin{align}
    g(\bmA) = g(\bmM^{-1/2}\bmM^{1/2}\bmA\bmM^{-1/2}\bmM^{1/2}) = \bmM^{-1/2} g(\bmM^{1/2}\bmA\bmM^{-1/2}) \bmM^{1/2} = \bmM^{-1/2} g(\hatbmA) \bmM^{1/2}.
  \end{align}
  Thus, we have
  \begin{align}
    \|g(\bmA)\|_2 = \|\bmM^{-1/2} g(\hatbmA) \bmM^{1/2}\|_2 \le \kappa(\bmM^{1/2}) \|g(\hatbmA)\|_2
  \end{align}
  because $\kappa(\bmM^{1/2}) = \kappa(\bmM)^{1/2}$.
  By using \eqref{eq:crouzeix_inequality} with the matrix $\hatbmA$, we have \eqref{eq:new_bound}.
\end{proof}

\begin{remark}
  Because the eigenvalues of $\hatbmA$ coincide with that of $\bmA$, $\calW(\hatbmA)$ is $(1+\sqrt{2})\kappa(\bmA)$-spectral set of $\bmA$.
  See, e.g., \cite{crouzeix2017numerical} for the definition of spectral sets.
\end{remark}

Next, we show that a rectangle region enclosing $\calW(\hatbmA)$ can be obtained by solving generalized eigenvalue problems as follows:
\begin{proposition}
  Let us define
  \begin{align}
    \bmD \coloneq \frac{1}{2}(\bmK + \bmK^\top), \qquad \bmC \coloneq \frac{1}{2 \rmi}(\bmK - \bmK^\top).
  \end{align}
  Then, the left (right) endpoint of $\calW(\hatbmA)$ is the minimum (maximum) eigenvalue of the generalized eigenvalue problem $(\tau \bmD) \bmx = \lambda \bmM \bmx$.
  Similarly the top (bottom) endpoint of $\calW(\hatbmA)$ is the maximum (minimum) eigenvalue of the generalized eigenvalue problem $(\tau \bmC)\bmx = \lambda \bmM \bmx$.
\end{proposition}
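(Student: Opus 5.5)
The plan is to compute the Hermitian and skew-Hermitian parts of $\hatbmA$ explicitly. Once these are known, the bounding-box fact recalled in the introduction (the one used in Algorithm~\ref{alg:conventional}) reduces the statement to a congruence argument: the left/right (bottom/top) extent of $\calW(\hatbmA)$ equals the extreme eigenvalues of $(\hatbmA+\hatbmA^\ast)/2$ (respectively $(\hatbmA-\hatbmA^\ast)/2\rmi$).

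\textbf{Step 1: compute the two parts.} Since $\bmM$ is real SPD, $\bmM^{1/2}$ is real SPD, so $(\bmM^{\pm1/2})^\ast=\bmM^{\pm1/2}$. We have
\begin{align}
  \hatbmA = \tau\bmM^{1/2}\bmM^{-1}\bmK\bmM^{-1/2} = \tau\bmM^{-1/2}\bmK\bmM^{-1/2}.
\end{align}
Because $\bmK$ is real, $\bmK^\ast=\bmK^\top$, and therefore
\begin{align}
  \frac{\hatbmA+\hatbmA^\ast}{2} = \tau\bmM^{-1/2}\bmD\bmM^{-1/2},
  \qquad
  \frac{\hatbmA-\hatbmA^\ast}{2\rmi} = \tau\bmM^{-1/2}\bmC\bmM^{-1/2}.
\end{align}
Both matrices are Hermitian; $\bmC$ is Hermitian because it is $1/\rmi$ times a real skew-symmetric matrix.

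\textbf{Step 2: pass to the generalized eigenproblem.} For a Hermitian $\bm{X}$, the extreme eigenvalues of $\tau\bmM^{-1/2}\bm{X}\bmM^{-1/2}$ are the extrema of the Rayleigh quotient $\bmv^\ast\tau\bmM^{-1/2}\bm{X}\bmM^{-1/2}\bmv/\bmv^\ast\bmv$. Substituting $\bmx=\bmM^{-1/2}\bmv$, which is a bijection of $\bbC^n\setminus\{0\}$, turns this into $\tau\bmx^\ast\bm{X}\bmx/\bmx^\ast\bmM\bmx$. By the Courant--Fischer theorem for Hermitian definite pencils, the extrema of this generalized Rayleigh quotient are the extreme eigenvalues of $(\tau\bm{X})\bmx=\lambda\bmM\bmx$. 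Equivalently, $\tau\bmM^{-1/2}\bm{X}\bmM^{-1/2}\bmv=\lambda\bmv$ if and only if $\tau\bm{X}\bmx=\lambda\bmM\bmx$, so the spectra coincide. Applying this with $\bm{X}=\bmD$ and with $\bm{X}=\bmC$ gives both claims.

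\textbf{Step 3: justify the bounding-box fact.} I would recall why the real part of $\bmv^\ast\hatbmA\bmv$ is $\bmv^\ast\frac{\hatbmA+\hatbmA^\ast}{2}\bmv$ and the imaginary part is $\bmv^\ast\frac{\hatbmA-\hatbmA^\ast}{2\rmi}\bmv$. The projections of $\calW(\hatbmA)$ onto the real and imaginary axes are then exactly the numerical ranges of these Hermitian matrices, which are the intervals between their extreme eigenvalues. Hence the extremes are attained and are genuinely the endpoints, not just bounds.

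I expect no real obstacle. The only care needed is in two places: checking that $\bmK$ is real so that $\bmK^\ast=\bmK^\top$, and matching the orientation conventions. In particular, the top endpoint of $\calW(\hatbmA)$ corresponds to the maximum eigenvalue of the $\bmC$-pencil, since $\mathrm{Im}(\bmv^\ast\hatbmA\bmv)=\bmv^\ast\frac{\hatbmA-\hatbmA^\ast}{2\rmi}\bmv$.
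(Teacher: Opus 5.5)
Your proof is correct and follows the same route as the paper. Both compute the Hermitian and skew-Hermitian parts of $\hatbmA=\tau\bmM^{-1/2}\bmK\bmM^{-1/2}$ as $\tau\bmM^{-1/2}\bmD\bmM^{-1/2}$ and $\tau\bmM^{-1/2}\bmC\bmM^{-1/2}$, then note that their spectra coincide with those of the pencils $(\tau\bmD,\bmM)$ and $(\tau\bmC,\bmM)$. Your Step~3 additionally proves the bounding-box fact (that these extreme eigenvalues are exactly the endpoints of the axis projections of $\calW(\hatbmA)$), which the paper only cites from its introduction.
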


\begin{proof}
  For $\hatbmA = \bmM^{1/2} \bmA \bmM^{-1/2} = \tau \bmM^{-1/2}\bmK\bmM^{-1/2}$, its symmetric and skew-symmetric part are $\tau \bmM^{-1/2}\bmD\bmM^{-1/2}$ and $\tau \bmM^{-1/2}\bmC\bmM^{-1/2}$, respectively because $\bmM$ is SPD.
  Therefore, the eigenvalues of the symmetric parts coincide with those of $\bmM^{-1}(\tau\bmD)$, and the eigenvalues of the skew-symmetric part coincide with those of $\bmM^{-1}(\tau \bmC)$.
\end{proof}

It is known that the numerical range of the advection-diffusion operator lies in the left half plane.
In addition, if the finite element method is employed, the numerical range of the resulting matrix $\bmK$ also lies in the left half plane \cite{gockler2017acceleration}.
The set $\calW(\hatbmA)$ inherits this convenient property as follows:
\begin{proposition}
  Suppose that $\calW(\bmK)$ is in the left half plane.
  Then, $\calW(\hatbmA)$ is also in the left half plane.
\end{proposition}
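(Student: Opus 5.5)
The plan is to work directly from the definition of the numerical range, using the congruence structure of $\hatbmA$. Recall $\hatbmA = \bmM^{1/2}\bmA\bmM^{-1/2} = \tau \bmM^{-1/2}\bmK\bmM^{-1/2}$, where $\bmM^{-1/2}$ is the real SPD square root of $\bmM^{-1}$, so it is Hermitian and invertible. Take any $\bmv\in\bbC^n$ with $\|\bmv\|_2=1$ and set $\bmx \coloneq \bmM^{-1/2}\bmv$. Then
\begin{align}
  \bmv^\ast \hatbmA \bmv = \tau\, \bmv^\ast \bmM^{-1/2}\bmK\bmM^{-1/2}\bmv = \tau\, \bmx^\ast \bmK \bmx = \tau \|\bmx\|_2^2 \, \bmu^\ast \bmK \bmu,
\end{align}
where $\bmu \coloneq \bmx/\|\bmx\|_2$ is well defined because $\bmx \neq 0$ (as $\bmM^{-1/2}$ is nonsingular).

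Since $\|\bmu\|_2=1$, we have $\bmu^\ast\bmK\bmu \in \calW(\bmK)$, which by hypothesis lies in the left half plane. Multiplying by the positive real scalar $\tau\|\bmx\|_2^2$ preserves the sign of the real part, and rescales any point of the open half plane into the open half plane, and likewise for the closed one. Hence every point $\bmv^\ast\hatbmA\bmv$ of $\calW(\hatbmA)$ lies in the left half plane. Equivalently, one can note $\mathrm{Re}(\bmx^\ast\bmK\bmx) = \bmx^\ast\bmD\bmx$ with $\bmD$ as in the previous proposition; the hypothesis says $\bmD$ is negative (semi)definite, and congruence by $\bmM^{-1/2}$ preserves this, which also matches the generalized eigenvalue characterization there.

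There is no real obstacle. The only point needing care is to make "left half plane" precise (open versus closed) and to check that positive scaling preserves it in either reading. The conclusion is the same in both cases, with strict inequality carried over because $\tau\|\bmx\|_2^2>0$.
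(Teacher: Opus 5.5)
Your proposal is correct and follows essentially the same approach as the paper. Both proofs rewrite $\bmv^\ast\hatbmA\bmv = \tau(\bmM^{-1/2}\bmv)^\ast\bmK(\bmM^{-1/2}\bmv)$ and then apply the hypothesis on $\calW(\bmK)$. Your explicit normalization $\bmu = \bmx/\|\bmx\|_2$ (with $\bmx = \bmM^{-1/2}\bmv$) and your remark on open versus closed half planes spell out the positive-scaling step that the paper's one-line proof leaves implicit.
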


\begin{proof}
  For $\bmv \in \bbC^{n}$, we have
  \begin{align}
    \mathrm{Re}(\bmv^\ast \hatbmA \bmv) = \mathrm{Re}(\tau \bmv^\ast \bmM^{-1/2} \bmK \bmM^{-1/2} \bmv)
    = \tau \mathrm{Re}\left((\bmM^{-1/2}\bmv)^\ast \bmK (\bmM^{-1/2} \bmv)\right) \le 0.
  \end{align}
\end{proof}

At the end of this section, we summarize the error-control framework in Algorithm \ref{alg:propose}.
First, we determine a rectangle $\calR$ enclosing $\calW(\hatbmA)$ by computing the extreme eigenvalues of $(\tau\bmD, \bmM)$ and $(\tau\bmC, \bmM)$, respectively.
Then, following a procedure similar to Algorithm \ref{alg:propose}, we construct a rational function $r(z) \approx \rme^z$ on $\calR$ and compute $r(\bmA)\bmb$.
Note that the eigenvalues and the condition number required in Algorithm \ref{alg:propose} only need to be estimated roughly.
For example, it suffices to obtain a lower bound for $\kappa(\bmM)$.
If $\kappa(\bmM)$ is computed with a relative error bounded by $\delta~ (< 1)$, yielding an approximate value $\tilde{\kappa}$, one can use $\tilde{\kappa}/(1-\delta)$ in place of $\kappa(\bmM)$ because the inequality $\kappa(\bmM) \ge \tilde{\kappa}/(1-\delta)$ holds.

\begin{algorithm}[htbp]
  \caption{The proposed error control framework for $\rme^{\bmA} \bmb$ with $\bmA = \tau \bmM^{-1}\bmK$}
  \label{alg:propose}
  \begin{algorithmic}[1]
    \Statex \textbf{Input} $\tau, \bmM, \bmK, \bmb, \epsilon$
    \State $\bmD = (\bmK + \bmK^\top)/2, \quad \bmC = (\bmK - \bmK^\top) / 2\rmi$
    \State Compute the maximum (minimum) eigenvalue $\mu_{\max}$ ($\mu_{\min}$) of the generalized eigenvalue problem $(\tau \bmD) \bmx = \mu \bmM \bmx$.
    \State Compute the maximum (minimum) eigenvalue $\nu_{\max}$ ($\nu_{\min}$) of the generalized eigenvalue problem $(\tau \bmC) \bmx = \mu \bmM \bmx$.
    \State $\calR = \{z \in \bbC \colon \mathrm{Re}(z) \in [\mu_{\min}, \mu_{\max}], ~ \mathrm{Im}(z) \in [\nu_{\min}, \nu_{\max}]\} \quad (\supseteq \calW(\hatbmA))$
    \State Compute $\kappa(\bmM)$
    \State Construct a rational approximation $r(z) \approx \rme^z$ on $\calR$ such that
    \begin{align}
      \max_{z \in \calR} |r(z) - \rme^z| \le \frac{\epsilon}{(1 + \sqrt{2}) \kappa(\bmM)^{1/2}}.
    \end{align}
    \Statex \textbf{Output} $r(A)\bmb \quad (\|r(\bmA)\bmb - \rme^{\bmA}\bmb\|_2 \le \epsilon \|\bmb\|_2)$
  \end{algorithmic}
\end{algorithm}

\section{Numerical examples}
This section presents numerical results of the proposed framework described in Algorithm \ref{alg:propose}.
The experiments were conducted on a Mac Studio M4 Max equipped with 128 GB of RAM.
All algorithms were implemented in Julia 1.12.3, and the source code is available from the corresponding author upon request.
Unless otherwise specified, all computations were performed using IEEE double-precision arithmetic.

\subsection{Test problems}
We consider a two-dimensional advection-diffusion equation on both a square domain and a star-shaped domain, as illustrated in Figure \ref{fig:domain}.
The governing equation is given by
\begin{align}\label{eq:test_problem}
  \begin{aligned}
    & \frac{\partial u}{\partial t} = d \Delta u + \bmc^\top \nabla u && ((x,y) \in \Omega, t > 0),\\
    & u(t,x,y) = 0 && ((x,y) \in \partial\Omega),\\
    & u(0,x,y) = u_0(x,y) && ((x,y) \in \Omega, t=0),\\
  \end{aligned}
\end{align}
where $d \in \{10^{-1}, 10^{-3}\}$, $\bmc = [1, 1]^\top$.
The initial condition for the square domain is
\begin{align}
  u_0(x,y) = \exp\left(-\sinh(70(x-1/2)^4) -\sinh(70(y-1/2)^4)\right),
\end{align}
and that for the star-shaped domain is
\begin{align}
  u_0(x,y) = \exp\left(-\sinh(70(x/2)^4) -\sinh(70(y/2)^4)\right).
\end{align}
We employ the standard Galerkin discretization with the mesh generated by FreeFEM++ \cite{hecht2012new}.
This yields the following system of ordinary differential equations as a semi-discrete scheme:
\begin{align}
  \bmM \bmu'(t) = \bmK\bmu(t), \qquad \bmu(0) = \bmb.
\end{align}
An exponential integrator requires to compute $\rme^{\tau \bmM^{-1}\bmK} \bmb$ for a given time step size $\tau$ ($\bmb$ is replaced with another vector as time integration proceeds).

Since our aim is to check the performance of the proposed method, we employ the linear PDE as our toy problems.
We note that even for nonlinear PDEs, an exponential integrator requires the computation of the action of the exponential of matrices of the form $\rme^{\tau \bmM^{-1}\bmK} \bmb$.

\begin{figure}[htbp]
  \centering
  \includegraphics[width=0.8\linewidth]{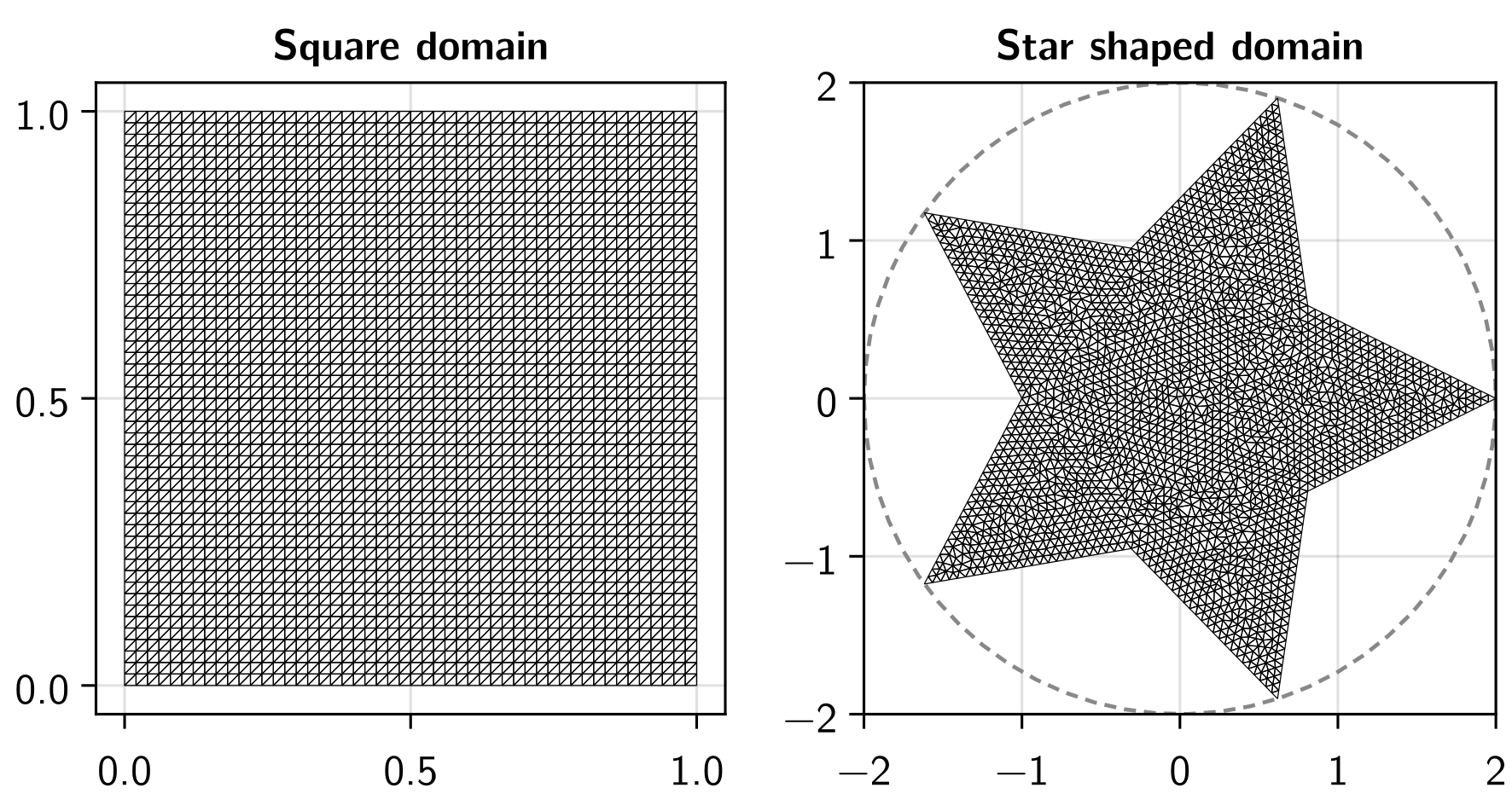}
  \caption{The square domain and the star-shaped domain we used.}
  \label{fig:domain}
\end{figure}

\subsection{Computational methods of the matrix exponential}
In this test, we consider two computational methods for the matrix exponential.

\begin{itemize}
  \item \textbf{Subdiagonal Pad\'{e} approximation} (\texttt{sub-pade})\\
    The Pad\'{e} approximation of $\rme^{z}$ at $z=0$ has been widely used for computing matrix exponentials and their actions, as its coefficients are available in closed form.
    Although diagonal \cite{higham2005scaling,almohy2010new} or polynomial \cite{almohy2011computing} types are common choice, G\"{u}ettel et al. \cite{guettel2016scaled} pointed out several advantages of subdiagonal types.
    For example, the subdiagonal Pad\'e approximation may reduce the computaional cost for matrices having large eigenvalues in negative direction, and it can be implemented in partial fraction form.
    Given the lack of rigorous error estimates for non-symmetric matrices in \cite{guettel2016scaled}, we employ the following heuristic parameter selection:
    \begin{itemize}
      \item The Pad\'{e} order is fixed to $(4,5)$.
      \item The scaling parameter $s_\ast$ is chosen to ensure that the approximation error on the rectangle $\calR$ is within the target tolerance:
      \begin{align}
        s_{\ast} = \min \left\{
        s = 1,2,3,\dots \colon
        \sup_{z\in\calR} |\rme^{z}-(r_{4,5}(z/s))^{s}| \le \frac{\epsilon}{(1+\sqrt{2}) \kappa(\bmM)}
        \right\},
      \end{align}
      where $r_{4,5}$ is the $(4,5)$-type Pad\'{e} approximant.
      \item The approximation $r_{4,5}(\bmA/s) \cdots r_{4,5}(\bmA/s) \bmb$ is computed in the partial fraction form, see \cite[Sect.~3.2]{guettel2016scaled}.
    \end{itemize}
  \item \textbf{Rational interpolation} (\texttt{rat-interp})\\
  Recent advances in algorithms for rational approximation of scalar functions enable the efficient construction of approximations over specified complex domains.
  Here, we numerically construct a rational approximation in partial fraction form:
  \begin{align}\label{eq:partial_fraction}
    r_{\operatorname{pf}}(z) = \gamma + \sum_{k=1}^m \frac{\alpha_k}{\beta_k - z} \approx \rme^z,
  \end{align}
  and evaluate $r_{\operatorname{pf}}(\bmA)\bmb$.
  The approximation $r_{\operatorname{pf}}$ is constructed by first running the continuum AAA algorithm \cite{driscoll2024aaa} to determine the poles $\beta_k$.
  Subsequently, the coefficients $\gamma$ and $\alpha_k$ are determined by solving a linear least-squares problem.
  Because the coefficient matrix in the linear least square problem can be ill-conditioned, we employ the double-double arithmetic via the \texttt{DoubleFloats.jl} package \footnote{
    \url{https://github.com/JuliaMath/DoubleFloats.jl}
  } for solving it.
\end{itemize}

\subsection{Results for checking mathematical validity}
Using the medium size matrices, we see several advantages using $\calW(\hatbmA)$ rather than $\calW(\bmA)$.
The advection-diffusion equation \eqref{eq:test_problem} is discretized such that the matrix size $n$ is approximately 2000.
The time step $\tau$ is set to the average mesh edge length $\bar{h}$.
Table~\ref{tab:test_matrices} summarizes the properties of the test matrices.
Their numerical ranges, $\calW(\bmA)$ and $\calW(\hatbmA)$, are visualized in Figure \ref{fig:numerical_range_test_matrices}.
As shown in Proposition~2, $\calW(\hatbmA)$ lies in the left half plane while $\calW(\bmA)$ extends into the right half plane.
\begin{table}[htbp]
  \centering
  \caption{
    Properties of the test matrices $\bmA = \tau \bmM^{-1}\bmK$.
    Here, $n$ denotes the matrix size and $\bar{h}$ is the average edge length of the mesh.
  }
  \begin{tabular}{ccc|rrr}
    \hline
    Shape & Element & $d$ & $n$ & $\bar{h}$ & $\kappa(\bmM)$ \\
    \hline
    square & P1 & \num{e-1} & 2401 & \num{2.28e-02} & \num{3.99e+00} \\
    square & P1 & \num{e-3} & 2401 & \num{2.28e-02} & \num{3.99e+00} \\
    square & P2 & \num{e-1} & 2401 & \num{4.55e-02} & \num{5.81e+00} \\
    square & P2 & \num{e-3} & 2401 & \num{4.55e-02} & \num{5.81e+00} \\
    star & P1 & \num{e-1} & 2192 & \num{5.51e-02} & \num{6.55e+00} \\
    star & P1 & \num{e-3} & 2192 & \num{5.51e-02} & \num{6.55e+00} \\
    star & P2 & \num{e-1} & 2431 & \num{1.05e-01} & \num{1.17e+01} \\
    star & P2 & \num{e-3} & 2431 & \num{1.05e-01} & \num{1.17e+01} \\
    \hline
  \end{tabular}
  \label{tab:test_matrices}
\end{table}

\begin{figure}[htbp]
  \centering
  \includegraphics[height=20cm]{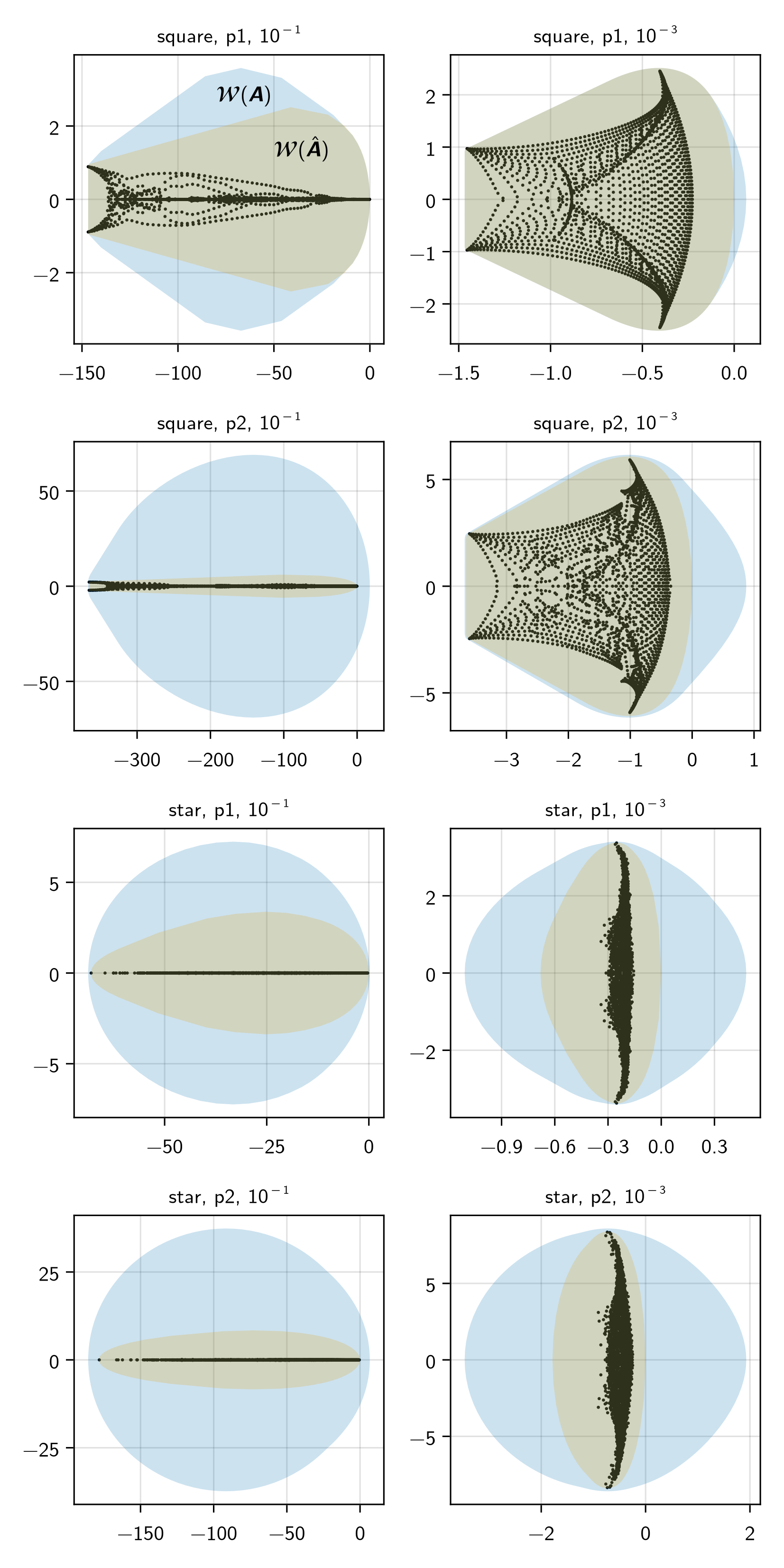}
  \caption{
    Eigenvalues and numerical ranges $\calW(\bmA)$, $\calW(\hatbmA)$ for the test matrices in Tab.~\ref{tab:test_matrices}.
    Black dots represent eigenvalues, while blue and orange regions denote $\calW(\bmA)$ and $\calW(\hatbmA)$, respectively.
    For these matrices, $\calW(\hatbmA)$ is contained within $\calW(\bmA)$.
  }
  \label{fig:numerical_range_test_matrices}
\end{figure}

In the computation, eigenvalues for the pencils $(\tau\bmD, \bmM)$ and $(\tau\bmC,\bmM)$ were computed using the \texttt{eigvals} function that computes all eigenvalues.
The condition number $\kappa(\bmM)$ was obtained via the \texttt{cond} function that computes all singular values.
Linear systems were solved using a direct solver (\texttt{K\textbackslash b} in Julia).
Reference solutions were generated by \texttt{exp(tau*(M\textbackslash K))*b}, where \texttt{exp} is Julia's built-in function, which is implemented based on the scaling and squaring algorithm in \cite{higham2005scaling}.
The target tolerance $\epsilon$ was varied from $10^{-8}$ to $10^{-2}$.

Figure \ref{fig:result} shows the approximation errors for each method.
Each data point corresponds to a matrix from Table \ref{tab:test_matrices}.
While some cases exhibit overestimation, the error remains below the prescribed tolerance in all instances, confirming that the proposed error control framework functions as intended.
\begin{figure}[htbp]
  \centering
  \includegraphics[width=0.8\linewidth]{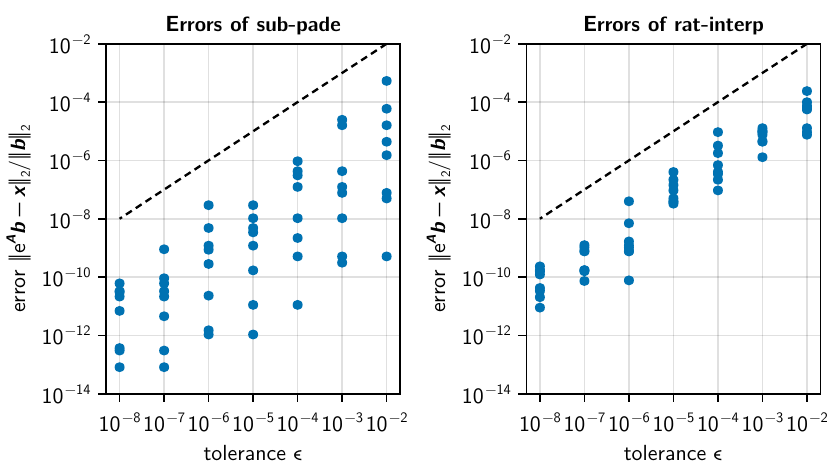}
  \caption{
    Errors of each method for $\tau = \bar{h}$.
    Each point represents the result for one test matrix.
    Although overestimations occur, the error is smaller than the tolerance for all matrices and methods.
  }
  \label{fig:result}
\end{figure}

To demonstrate the advantage of using $\calW(\hatbmA)$ over $\calW(\bmA)$, we compared the complexity of constructing rational functions $r(z)$ with $\epsilon=10^{-6}$ under two conditions:
(i) $|r(z)-\rme^{z}|\le \epsilon/(1+\sqrt{2})$ on the rectangle enclosing $\calW(\bmA)$, and
(ii) $|r(z)-\rme^{z}|\le \epsilon/((1+\sqrt{2})\kappa(\bmM)^{1/2})$ on the rectangle enclosing $\calW(\hatbmA)$.
Table \ref{tab:comparison_ns} lists the degrees of the denominators of rational approximations.
In all test cases, the degree required for (ii) is less than or equal to that for (i), illustrating the computational advantage gained by using $\calW(\hatbmA)$.

\begin{table}[htbp]
  \centering
  \caption{
    The degree of the denominator in the rational approximation when $\epsilon=10^{-6}$.
    Therefore, smaller numbers are preferable.
    For \texttt{sub-pade}, ``--'' indicates that the degree exceeded 320 ($s_\ast > 64$); for \texttt{rat-interp}, it indicates a degree exceeding 128.
  }
  \begin{tabular}{ccc|rr|rr}
    \hline
    & & & \multicolumn{2}{c|}{\texttt{sub-pade}} & \multicolumn{2}{c}{\texttt{rat-interp}}\\
    Shape & Element & $d$ & (i) $\calW(\bmA)$ & (ii) $\calW(\hatbmA)$ & (i) $\calW(\bmA)$ & (ii) $\calW(\hatbmA)$ \\
    \hline
    square & P1 & $\num{e-1}$ & 25 & 25 & 9 & 9 \\
    square & P1 & $\num{e-3}$ & 10 & 10 & 5 & 5 \\
    square & P2 & $\num{e-1}$ & -- & 25 & -- & 11 \\
    square & P2 & $\num{e-3}$ & 25 & 20 & 8 & 8 \\
    star & P1 & $\num{e-1}$ & 20 & 20 & 11 & 9 \\
    star & P1 & $\num{e-3}$ & 15 & 15 & 6 & 6 \\
    star & P2 & $\num{e-1}$ & 185 & 25 & -- & 12 \\
    star & P2 & $\num{e-3}$ & 40 & 30 & 10 & 8 \\
    \hline
  \end{tabular}
  \label{tab:comparison_ns}
\end{table}

\subsection{Results for \texorpdfstring{$\rme^{\bmA}{\bmb}$ for $\tau=10\bar{h}$}{}}
We further evaluated the framework using a larger time step, $\tau = 10\bar{h}$, with the same matrices $\bmM$ and $\bmK$ as in the previous subsection.
The resulting errors are shown in Figure \ref{fig:result-10tau}.
With the exception of three cases using \texttt{rat-interp} at $\epsilon=10^{-8}$, the errors consistently fell below the tolerance.
In these three cases, the failure occurred during the construction of the partial fraction approximation itself, despite the use of double-double arithmetic.
This suggests the limitation lies in the rational approximation step rather than the proposed error control framework.

Table \ref{tab:comparison_ns_10tau} presents the degrees of the denominators for $\tau=10\bar{h}$.
The results indicate that while $\calW(\bmA)$ is often too large to construct an accurate rational approximation, $\calW(\hatbmA)$ provides a region to construct approximations successfully.

\begin{figure}[htbp]
  \centering
  \includegraphics[width=0.8\linewidth]{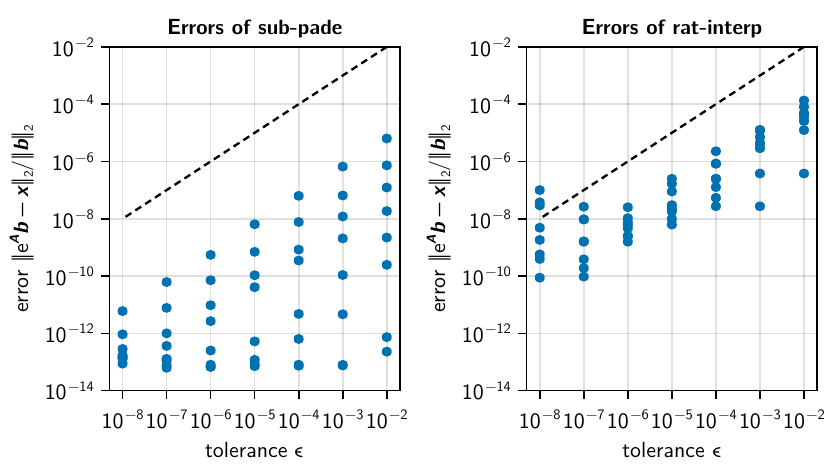}
  \caption{
    Approximation errors for $\tau = 10\bar{h}$.
    Each point represents the result for one test matrix.
  }
  \label{fig:result-10tau}
\end{figure}

\begin{table}[htbp]
  \centering
  \caption{
    Denominator degrees for $\epsilon=10^{-6}$ and $\tau=10\bar{h}$, following the format of Table \ref{tab:comparison_ns}.
  }
  \begin{tabular}{ccc|rr|rr}
    \hline
    & & & \multicolumn{2}{c|}{\texttt{sub-pade}} & \multicolumn{2}{c}{\texttt{rat-interp}}\\
    Shape & Element & $d$ & (i) $\calW(\bmA)$ & (ii) $\calW(\hatbmA)$ & (i) $\calW(\bmA)$ & (ii) $\calW(\hatbmA)$ \\
    \hline
    square & P1 & $\num{e-1}$ & 75 & 50 & 22 & 19 \\
    square & P1 & $\num{e-3}$ & 90 & 85 & 17 & 17 \\
    square & P2 & $\num{e-1}$ & -- & 130 & -- & 31 \\
    square & P2 & $\num{e-3}$ & -- & 170 & -- & 30 \\
    star & P1 & $\num{e-1}$ & 220 & 75 & 37 & 22 \\
    star & P1 & $\num{e-3}$ & 200 & 130 & 22 & 19 \\
    star & P2 & $\num{e-1}$ & -- & 190 & -- & 40 \\
    star & P2 & $\num{e-3}$ & -- & 310 & -- & 37 \\
    \hline
  \end{tabular}
  \label{tab:comparison_ns_10tau}
\end{table}

\subsection{Results for the large matrices}
In this section, some results using sparse implementations are presented.
The discretization was performed so that the matrix size is approximately $10,000$, and $\tau$ is set to $5\bar{h}$.
Information on the test matrices is listed in Table~\ref{tab:test_matrices_large}.
In the computations, the rectangular region $\calR$ and $\kappa(\bmM)$ were computed with \texttt{Arpack.jl} package that employs Lanczos method.
Here, the Lanczos method was stopped when the relative residual of the Ritz value is $10^{-3}$ or less.
The linear systems were solved using a sparse direct solver provided by \texttt{SparseArrays.jl} package.
As in previous experiments, the reference solution was computed using Julia's \texttt{exp} function.
Figure \ref{fig:result-large} shows the results.
As before, $\rme^{\bmA}\bmb$ was computed with an accuracy corresponding to the error tolerance in all cases.

\begin{table}[htbp]
  \centering
  \caption{
    Information on the test matrices $\bmA = \tau \bmM^{-1}\bmK$.
    Here, $n$ is the size of the matrix, and $\bar{h}$ denotes the average edge length of the mesh.
  }
  \begin{tabular}{ccc|rrr}
    \hline
    Shape & Element & $d$ & $n$ & $\bar{h}$ & $\kappa(\bmM)$ \\
    \hline
    square & P1 & \num{e-1} & 10000 & \num{1.13e-02} & \num{4.00e+00} \\
    square & P1 & \num{e-3} & 10000 & \num{1.13e-02} & \num{4.00e+00} \\
    square & P2 & \num{e-1} & 10201 & \num{2.23e-02} & \num{5.84e+00} \\
    square & P2 & \num{e-3} & 10201 & \num{2.23e-02} & \num{5.84e+00} \\
    star & P1 & \num{e-1} & 10504 & \num{2.55e-02} & \num{6.98e+00} \\
    star & P1 & \num{e-3} & 10504 & \num{2.55e-02} & \num{6.98e+00} \\
    star & P2 & \num{e-1} & 10627 & \num{5.08e-02} & \num{1.25e+01} \\
    star & P2 & \num{e-3} & 10627 & \num{5.08e-02} & \num{1.25e+01} \\
    \hline
  \end{tabular}
  \label{tab:test_matrices_large}
\end{table}

\begin{figure}[htbp]
  \centering
  \includegraphics[width=0.8\linewidth]{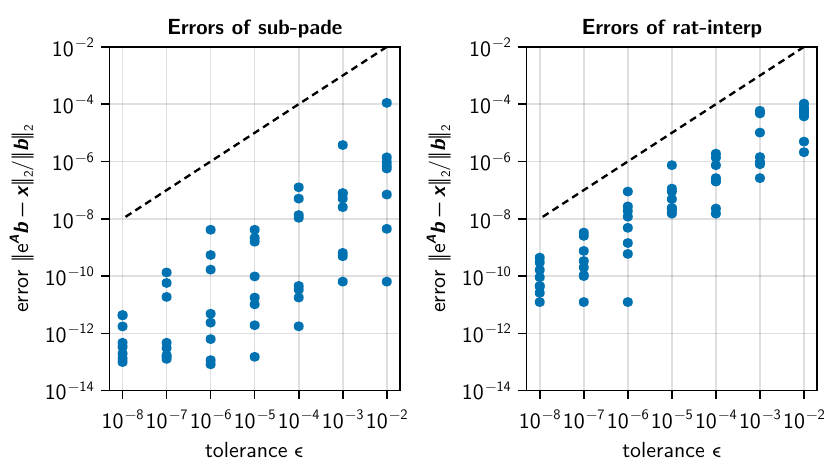}
  \caption{
    Approximation errors for matrices in Table \ref{tab:test_matrices_large} with $\tau=5\bar{h}$.
    Each point represents the result for one test matrix.
  }
  \label{fig:result-large}
\end{figure}

\section{Conclusion}
We proposed an error control framework for computing the matrix exponential of matrices with the structure $\bmA = \tau\bmM^{-1}\bmK$ that relies on $\calW(\hatbmA)$ rather than $\calW(\bmA)$.
The proposed set $\calW(\hatbmA)$ is convenient: an enclosing rectangle can be obtained numerically, and if $\calW(\bmK)$ lies in the left half plane then $\calW(\hatbmA)$ also resides in the left half plane.
Numerical experiments showed that, once a scalar rational approximation achieving the prescribed accuracy on $\calW(\hatbmA)$ is obtained, one can compute $\rme^{\hatbmA}\bmb$ with an error smaller than or equal to the tolerance.

Future work includes comprehensive performance evaluation in parallel computing environments using large-scale problems.

\backmatter


\section*{Declarations}

\begin{itemize}
\item Funding: this work was partially supported by JSPS KAKENHI grant number 25H00449.
\item Conflict of interest: the authors have no relevant financial or non-financial interests to disclose.
\item Data/Code availability: the data and the source code  are available from the corresponding author upon request.
\item Author contribution:
\begin{itemize}
  \item F.~Tatsuoka: conceptualization, experimentation, writing (original draft).
  \item Y.~Miyatake: conceptualization, writing (review), supervision
  \item T.~Sogabe: conceptualization, writing (review), supervision
\end{itemize}
\end{itemize}


\begin{thebibliography}{19}
\ifx \bisbn   \undefined \def \bisbn  #1{ISBN #1}\fi
\ifx \binits  \undefined \def \binits#1{#1}\fi
\ifx \bauthor  \undefined \def \bauthor#1{#1}\fi
\ifx \batitle  \undefined \def \batitle#1{#1}\fi
\ifx \bjtitle  \undefined \def \bjtitle#1{#1}\fi
\ifx \bvolume  \undefined \def \bvolume#1{\textbf{#1}}\fi
\ifx \byear  \undefined \def \byear#1{#1}\fi
\ifx \bissue  \undefined \def \bissue#1{#1}\fi
\ifx \bfpage  \undefined \def \bfpage#1{#1}\fi
\ifx \blpage  \undefined \def \blpage #1{#1}\fi
\ifx \burl  \undefined \def \burl#1{\textsf{#1}}\fi
\ifx \doiurl  \undefined \def \doiurl#1{\url{https://doi.org/#1}}\fi
\ifx \betal  \undefined \def \betal{\textit{et al.}}\fi
\ifx \binstitute  \undefined \def \binstitute#1{#1}\fi
\ifx \binstitutionaled  \undefined \def \binstitutionaled#1{#1}\fi
\ifx \bctitle  \undefined \def \bctitle#1{#1}\fi
\ifx \beditor  \undefined \def \beditor#1{#1}\fi
\ifx \bpublisher  \undefined \def \bpublisher#1{#1}\fi
\ifx \bbtitle  \undefined \def \bbtitle#1{#1}\fi
\ifx \bedition  \undefined \def \bedition#1{#1}\fi
\ifx \bseriesno  \undefined \def \bseriesno#1{#1}\fi
\ifx \blocation  \undefined \def \blocation#1{#1}\fi
\ifx \bsertitle  \undefined \def \bsertitle#1{#1}\fi
\ifx \bsnm \undefined \def \bsnm#1{#1}\fi
\ifx \bsuffix \undefined \def \bsuffix#1{#1}\fi
\ifx \bparticle \undefined \def \bparticle#1{#1}\fi
\ifx \barticle \undefined \def \barticle#1{#1}\fi
\bibcommenthead
\ifx \bconfdate \undefined \def \bconfdate #1{#1}\fi
\ifx \botherref \undefined \def \botherref #1{#1}\fi
\ifx \url \undefined \def \url#1{\textsf{#1}}\fi
\ifx \bchapter \undefined \def \bchapter#1{#1}\fi
\ifx \bbook \undefined \def \bbook#1{#1}\fi
\ifx \bcomment \undefined \def \bcomment#1{#1}\fi
\ifx \oauthor \undefined \def \oauthor#1{#1}\fi
\ifx \citeauthoryear \undefined \def \citeauthoryear#1{#1}\fi
\ifx \endbibitem  \undefined \def \endbibitem {}\fi
\ifx \bconflocation  \undefined \def \bconflocation#1{#1}\fi
\ifx \arxivurl  \undefined \def \arxivurl#1{\textsf{#1}}\fi
\csname PreBibitemsHook\endcsname

\bibitem[\protect\citeauthoryear{Higham}{2008}]{higham2008functions}
\begin{bbook}
\bauthor{\bsnm{Higham}, \binits{N.J.}}:
\bbtitle{Functions of Matrices: {Theory} and Computation}.
\bpublisher{SIAM},
\blocation{Philadelphia, PA}
(\byear{2008}).
\doiurl{10.1137/1.9780898717778}
\end{bbook}
\endbibitem

\bibitem[\protect\citeauthoryear{Hochbruck and
  Ostermann}{2010}]{hochbruck2010exponential}
\begin{barticle}
\bauthor{\bsnm{Hochbruck}, \binits{M.}},
\bauthor{\bsnm{Ostermann}, \binits{A.}}:
\batitle{Exponential integrators}.
\bjtitle{Acta Numer.}
\bvolume{19},
\bfpage{209}--\blpage{286}
(\byear{2010})
\doiurl{10.1017/s0962492910000048}
\end{barticle}
\endbibitem

\bibitem[\protect\citeauthoryear{Al-Mohy and
  Higham}{2011}]{almohy2011computing}
\begin{barticle}
\bauthor{\bsnm{Al-Mohy}, \binits{A.H.}},
\bauthor{\bsnm{Higham}, \binits{N.J.}}:
\batitle{Computing the action of the matrix exponential, with an application to
  exponential integrators}.
\bjtitle{SIAM J. Sci. Comput.}
\bvolume{33}(\bissue{2}),
\bfpage{488}--\blpage{511}
(\byear{2011})
\doiurl{10.1137/100788860}
\end{barticle}
\endbibitem

\bibitem[\protect\citeauthoryear{Güttel and
  Nakatsukasa}{2016}]{guettel2016scaled}
\begin{barticle}
\bauthor{\bsnm{Güttel}, \binits{S.}},
\bauthor{\bsnm{Nakatsukasa}, \binits{Y.}}:
\batitle{Scaled and squared subdiagonal {Padé} approximation for the matrix
  exponential}.
\bjtitle{SIAM J. Matrix Anal. Appl.}
\bvolume{37}(\bissue{1}),
\bfpage{145}--\blpage{170}
(\byear{2016})
\doiurl{10.1137/15m1027553}
\end{barticle}
\endbibitem

\bibitem[\protect\citeauthoryear{Caliari
  et~al.}{2004}]{caliari2004interpolating}
\begin{barticle}
\bauthor{\bsnm{Caliari}, \binits{M.}},
\bauthor{\bsnm{Vianello}, \binits{M.}},
\bauthor{\bsnm{Bergamaschi}, \binits{L.}}:
\batitle{Interpolating discrete advection-diffusion propagators at {Leja}
  sequences}.
\bjtitle{J. Comput. Appl. Math.}
\bvolume{172}(\bissue{1}),
\bfpage{79}--\blpage{99}
(\byear{2004})
\doiurl{10.1016/j.cam.2003.11.015}
\end{barticle}
\endbibitem

\bibitem[\protect\citeauthoryear{Caliari et~al.}{2016}]{caliari2016leja}
\begin{barticle}
\bauthor{\bsnm{Caliari}, \binits{M.}},
\bauthor{\bsnm{Kandolf}, \binits{P.}},
\bauthor{\bsnm{Ostermann}, \binits{A.}},
\bauthor{\bsnm{Rainer}, \binits{S.}}:
\batitle{The {Leja} method revisited: backward error analysis for the matrix
  exponential}.
\bjtitle{SIAM J. Sci. Comput.}
\bvolume{38}(\bissue{3}),
\bfpage{1639}--\blpage{1661}
(\byear{2016})
\doiurl{10.1137/15m1027620}
\end{barticle}
\endbibitem

\bibitem[\protect\citeauthoryear{Weideman}{2006}]{weideman2006optimizing}
\begin{barticle}
\bauthor{\bsnm{Weideman}, \binits{J.A.C.}}:
\batitle{Optimizing {Talbot}'s contours for the inversion of the {Laplace}
  transform}.
\bjtitle{SIAM J. Numer. Anal.}
\bvolume{44}(\bissue{6}),
\bfpage{2342}--\blpage{2362}
(\byear{2006})
\doiurl{10.1137/050625837}
\end{barticle}
\endbibitem

\bibitem[\protect\citeauthoryear{Weideman and
  Trefethen}{2007}]{weideman2007parabolic}
\begin{barticle}
\bauthor{\bsnm{Weideman}, \binits{J.A.C.}},
\bauthor{\bsnm{Trefethen}, \binits{L.N.}}:
\batitle{Parabolic and hyperbolic contours for computing the {Bromwich}
  integral}.
\bjtitle{Math. Comput.}
\bvolume{76}(\bissue{259}),
\bfpage{1341}--\blpage{1357}
(\byear{2007})
\doiurl{10.1090/s0025-5718-07-01945-x}
\end{barticle}
\endbibitem

\bibitem[\protect\citeauthoryear{Tatsuoka et~al.}{2024}]{tatsuoka2024computing}
\begin{botherref}
\oauthor{\bsnm{Tatsuoka}, \binits{F.}},
\oauthor{\bsnm{Sogabe}, \binits{T.}},
\oauthor{\bsnm{Kemmochi}, \binits{T.}},
\oauthor{\bsnm{Zhang}, \binits{S.-L.}}:
Computing the matrix exponential with the double exponential formula.
Spec. Matrices
\textbf{12}(1)
(2024)
\doiurl{10.1515/spma-2024-0013}
\end{botherref}
\endbibitem

\bibitem[\protect\citeauthoryear{Saad}{1992}]{saad1992analysis}
\begin{barticle}
\bauthor{\bsnm{Saad}, \binits{Y.}}:
\batitle{Analysis of some {Kylov} subspace approximations to the matrix
  exponential operator}.
\bjtitle{SIAM J. Numer. Anal.}
\bvolume{29}(\bissue{1}),
\bfpage{209}--\blpage{228}
(\byear{1992})
\doiurl{10.1137/0729014}
\end{barticle}
\endbibitem

\bibitem[\protect\citeauthoryear{Göckler and
  Grimm}{2013}]{goeckler2013convergence}
\begin{barticle}
\bauthor{\bsnm{Göckler}, \binits{T.}},
\bauthor{\bsnm{Grimm}, \binits{V.}}:
\batitle{Convergence analysis of an extended {Krylov} subspace method for the
  approximation of operator functions in exponential integrators}.
\bjtitle{SIAM J. Numer. Anal.}
\bvolume{51}(\bissue{4}),
\bfpage{2189}--\blpage{2213}
(\byear{2013})
\doiurl{10.1137/12089226x}
\end{barticle}
\endbibitem

\bibitem[\protect\citeauthoryear{Güttel}{2013}]{guettel2013rational}
\begin{barticle}
\bauthor{\bsnm{Güttel}, \binits{S.}}:
\batitle{Rational {Krylov} approximation of matrix functions: numerical methods
  and optimal pole selection}.
\bjtitle{GAMM-Mitt.}
\bvolume{36}(\bissue{1}),
\bfpage{8}--\blpage{31}
(\byear{2013})
\doiurl{10.1002/gamm.201310002}
\end{barticle}
\endbibitem

\bibitem[\protect\citeauthoryear{Crouzeix and
  Palencia}{2017}]{crouzeix2017numerical}
\begin{barticle}
\bauthor{\bsnm{Crouzeix}, \binits{M.}},
\bauthor{\bsnm{Palencia}, \binits{C.}}:
\batitle{The numerical range is a $(1+\sqrt{2})$-spectral set}.
\bjtitle{SIAM J. Matrix Anal. Appl.}
\bvolume{38}(\bissue{2}),
\bfpage{649}--\blpage{655}
(\byear{2017})
\doiurl{10.1137/17m1116672}
\end{barticle}
\endbibitem

\bibitem[\protect\citeauthoryear{Johnson}{1978}]{johnson1978numerical}
\begin{barticle}
\bauthor{\bsnm{Johnson}, \binits{C.R.}}:
\batitle{Numerical determination of the field of values of a general complex
  matrix}.
\bjtitle{SIAM J. Numer. Anal.}
\bvolume{15}(\bissue{3}),
\bfpage{595}--\blpage{602}
(\byear{1978})
\doiurl{10.1137/0715039}
\end{barticle}
\endbibitem

\bibitem[\protect\citeauthoryear{Göckler and
  Grimm}{2017}]{gockler2017acceleration}
\begin{barticle}
\bauthor{\bsnm{Göckler}, \binits{T.}},
\bauthor{\bsnm{Grimm}, \binits{V.}}:
\batitle{Acceleration of contour integration techniques by rational {Krylov}
  subspace methods}.
\bjtitle{J. Comput. Appl. Math.}
\bvolume{316},
\bfpage{133}--\blpage{142}
(\byear{2017})
\doiurl{10.1016/j.cam.2016.08.040}
\end{barticle}
\endbibitem

\bibitem[\protect\citeauthoryear{Hecht}{2012}]{hecht2012new}
\begin{barticle}
\bauthor{\bsnm{Hecht}, \binits{F.}}:
\batitle{New development in {FreeFem++}}.
\bjtitle{J. Numer. Math.}
\bvolume{20}(\bissue{3--4}),
\bfpage{251}--\blpage{265}
(\byear{2012})
\doiurl{10.1515/jnum-2012-0013}
\end{barticle}
\endbibitem

\bibitem[\protect\citeauthoryear{Higham}{2005}]{higham2005scaling}
\begin{barticle}
\bauthor{\bsnm{Higham}, \binits{N.J.}}:
\batitle{The scaling and squaring method for the matrix exponential revisited}.
\bjtitle{SIAM J. Matrix Anal. Appl.}
\bvolume{26}(\bissue{4}),
\bfpage{1179}--\blpage{1193}
(\byear{2005})
\doiurl{10.1137/04061101x}
\end{barticle}
\endbibitem

\bibitem[\protect\citeauthoryear{Al-Mohy and Higham}{2010}]{almohy2010new}
\begin{barticle}
\bauthor{\bsnm{Al-Mohy}, \binits{A.H.}},
\bauthor{\bsnm{Higham}, \binits{N.J.}}:
\batitle{A new scaling and squaring algorithm for the matrix exponential}.
\bjtitle{SIAM J. Matrix Anal. Appl.}
\bvolume{31}(\bissue{3}),
\bfpage{970}--\blpage{989}
(\byear{2010})
\doiurl{10.1137/09074721X}
\end{barticle}
\endbibitem

\bibitem[\protect\citeauthoryear{Driscoll et~al.}{2024}]{driscoll2024aaa}
\begin{barticle}
\bauthor{\bsnm{Driscoll}, \binits{T.A.}},
\bauthor{\bsnm{Nakatsukasa}, \binits{Y.}},
\bauthor{\bsnm{Trefethen}, \binits{L.N.}}:
\batitle{{AAA} rational approximation on a continuum}.
\bjtitle{SIAM J. Sci. Comput.}
\bvolume{46}(\bissue{2}),
\bfpage{929}--\blpage{952}
(\byear{2024})
\doiurl{10.1137/23m1570508}
\end{barticle}
\endbibitem

\end{thebibliography}
\end{document}